\newtheorem{theorem}{Theorem}
\newtheorem{lemma}[theorem]{Lemma}
\newtheorem{corollary}[theorem]{Corollary}
\newtheorem{remark}{Remark}
\newcommand{\W}{r}
\newcommand{\WW}{R}
\newcommand{\s}{s}
\newcommand{\w}{w}
\newcommand{\bx}{\mathbf{x}}
\newcommand{\bz}{\mathbf{z}}
\newcommand{\suc}{successions}
\newcommand{\fun}{h}
\newcommand{\id}{h}
\newcommand{\Q}{P}
\newcommand{\A}{\mathbf{A}}
\newcommand{\M}{\mathbf{M}}
\newcommand{\U}{\mathbf{U}}
\newcommand{\V}{\mathbf{V}}
\newcommand{\bb}{\mathbf{b}}
\newcommand{\bc}{\mathbf{c}}
\newcommand{\be}{\mathbf{e}}
\newcommand{\by}{\mathbf{y}}
\newcommand{\bn}{\mathbf{n}}
\newcommand{\bone}{\mathbf{1_k}}
\newcommand{\G}{\mathcal{G}}
\newcommand{\HH}{\mathcal{H}}
\newcommand{\var}{\sigma}
\newcommand{\pd}{\partial}
\newcommand{\multinom}[2]{\genfrac{[}{]}{0pt}{}{#1}{#2}}
\begin{document}

\title{Distributions of \suc{} of arbitrary multisets}

\author{Yong Kong\\
School of Public Health\\
Yale University\\
300 Cedar Street, New Haven, CT 06520, USA\\
email: \texttt{yong.kong@yale.edu} }

\date{}
\maketitle
\newpage

\begin{abstract}
  By using the matrix formulation of the two-step approach to
  distributions of patterns in random sequences,
  recurrence and explicit formulas for the generating functions
  of \suc{} in random permutations of arbitrary multisets are derived.
  Explicit formulas for the mean and variance are also obtained.
\end{abstract}

%
%
%
%

%
%

\vskip 3mm
\noindent Key Words: generating function; permutations of multiset; successions; 2-sequence; exact enumeration problems.
\vskip 3mm

\newpage

\section{Introduction} \label{S:intro}

For a sequence $\sigma = ( a_1, a_2, \dots, a_n )$ where 
$a_i \in \{1, 2, \dots, k\}$,
we call $(a_{i}, a_{i+1})$ an \emph{increasing succession}
if $a_{i+1} - a_{i} = 1$.
If integer $j$ appears $n_j$ times in $\sigma$, 
we call $\sigma$ of \emph{specification} $\bn=[n_1, n_2, \dots, n_k]$,
with $\sum_{j=1}^k n_j = n$.
When $\bn=[1,1, \dots, 1] = [\bone]$, the set of all sequences $\{ \sigma_i \}$
is the usual symmetric permutation group.
As an example, when $k=3$, the specification $\bn=[2, 3, 2]$ specifies
two $1$'s, three $2$'s, and two $3$'s.
The following sequence with specification $\bn=[2, 3, 2]$ 
\[
  \UOLoverline{2 \enspace 3} \enspace 1 \enspace 3 \enspace \UOLoverline{1 \enspace 2} \enspace 2
  \]
has  two \suc{} $(2, 3)$ and $(1, 2)$, as indicated by the overhead bars.

The study of the number of random sequences with certain patterns,
while itself an interesting mathematical problem,
has found many applications in various fields~\citep{Balakrishnan2002,fu2003}.
With the development and omnipresence of computers, the study
  of patterns in random permutations has become invaluable in
  computer algorithm analysis~\citep{Knuth1998c}.
Many researchers have investigated various succession-related distributions
in the past.
Most earlier studies are on permutations with specification
$\bn=[1,1, \dots, 1] = [\bone]$
\citep{Whitworth1959, kaplansky1944, Riordan1945, Riordan1965, Abramson1966, Abramson1967, roselle1968, dwass1973, tanny1976, Reilly1980,johnson2001}.
The numbers of permutations of $[\bone]$ with no successions is given by
the following sequence for $k=1, 2, 3, \dots$
\begin{equation} \label{E:no-succ}
 1, 1, 3, 11, 53, 309, 2119, 16687, 148329, 1468457, \cdots
\end{equation}
(see A000255 of the On-Line Encyclopedia of Integer Sequences
\citep{Sloane2017}
and many references cited there).
This sequence is closely related to the
number of permutations of $n$ elements with no fixed points
(subfactorial or rencontres numbers, or derangements)
\cite[Chapter 4]{Whitworth1959}:
\[
 0, 1, 2, 9, 44, 265, 1854, 14833, 133496, 1334961, \cdots
\]
(See A000166  of the On-Line Encyclopedia of Integer Sequences
\citep{Sloane2017}).
%
%
Mysteriously, sequence~\eqref{E:no-succ} of permutations of $[\bone]$ with no successions
appears in Euler's transformation of the divergent series
$1 - 1! + 2! - 3! + 4! \cdots$
\cite[p. 29]{Hardy1949}.

The more difficult problem, the distribution of \suc{} in random permutations of
arbitrary multisets, however, has not  received much attention, and as Johnson stated,
``has remained unsolved for the last century''~\citep{Johnson2002}.
The successful attack of the problem~\citep{Johnson2002} came by using the versatile
finite Markov chain embedding technique
\citep{Fu1994,fu2003,Balakrishnan2002,Koutras1995}.
\citet{fu1995}
introduced the method to examine the distribution
of increasing \suc{} in $\bn=[\bone]$ and
later the method was applied to the multisets problem~\citep{Johnson2002}.

Recently we used a two-step approach to solve various problems in run and
pattern related problems
~\citep{Kong2006,Kong2016iwap,Kong2018,Kong2019}.
In this paper we use the matrix formulation of the
two-step approach to obtain the distribution of
increasing \suc{} with arbitrary multisets.
The main results are the recurrence and explicit formulas for
the generating function $\G_k$ of the whole system
in terms of the generating function $g_i$ of individual integers ($i=1, \dots, k$).
The explicit formulas for the mean and variance of the number of \suc{}
with arbitrary multisets are also derived.
As we have mentioned before~\citep{Kong2016iwap,Kong2019},
the advantage of expressing $\G_k$ in terms of $g_i$ is that,
by using appropriate $g_i$'s,
different kinds of distributions, with or without restrictions
on the number of individual integers ($i=1, \dots, k$),
can be readily handled
by straightforward specializations of the
generating functions $g_i$'s,
and various joint distributions can be obtained easily.
Compared with previous work~\citep{Johnson2002},
  this paper gives an explicit and concise formula for the generating function $\G_k$
  (Theorem~\ref{Th:explicit} and Corollary~\ref{C:G}),
  which covers all specifications for a given $k$,
  and a recurrence formula that makes it possible to
  calculate efficiently the number of sequences with exactly $\W$ \suc{}
  for a given specification $\bn$
  (Theorem~\ref{Th:P_recur}).
  In addition, explicit formulas are derived for the mean and variance
  for arbitrary specifications (Theorems~\ref{Th:mean} and \ref{Th:e12_var}).
  For a general discussion of the method upon which the results of this paper are based and
  the finite Markov chain embedding method, the readers may refer to the review~\citep{Kong2016iwap}.

The paper is organized as the follows.
First, in Section~\ref{S:Gk}, 
we obtain in Theorem~\ref{Th:rec}
a recurrence relation for the generating function $\G_k$ for \suc{} in
random permutations of arbitrary multisets,
and in Theorem~\ref{Th:explicit} the explicit formula of $\G_k$
is obtained in terms of $g_i$ of the individual integers.
Theorem~\ref{Th:P_recur} gives a recurrence relation
of \suc{} with respect to the numbers of a particular integer.
In Section~\ref{S:mean_var},
formulas for the mean and variance of \suc{} in random permutations
of arbitrary multisets are obtained.
In addition,
in Appendix~\ref{A:DM}, a formula  is given for the determinant of
$M_k$, the matrix used in the proof of Theorem~\ref{Th:rec}.
In Appendix~\ref{A:k2k3} explicit formulas of the number of \suc{}
in terms of specification $\bn$ are given for special cases
of $k=2$ and $k=3$.

In the following, for a given specification $\bn=[n_1, n_2, \dots, n_k]$,
the sequences are generated by permutations of integers from $1$ to $k$, 
and the length of the sequences is denoted by $n=\sum_{i=1}^k n_i$.
Let $\W$ be the number of \suc{} in a sequence,
and $\s_k(\W; \bn)$ be the number of sequences with
exactly $\W$ \suc{} for a given specification $\bn$.
The generating function of $\s_k(\W; \bn)$ for a particular specification $\bn$ is
denoted by
\[
  P_k(w, \bn) = \sum_\W  \s_k(\W; \bn) w^{\W} .
\]

Let $g_i(x_i)$, with $1 \le i \le k$, be the generating function of the
$i$th integer with indeterminate $x_i$ to track its number,
and $\G_{k}$ be the generating function for the whole system: 
\[
 \G_{k} = \G_{k} [ g_1(x_1), g_2(x_2), \dots,  g_k(x_k) ]
 = \sum_{\bn, \W} \s_k(\W; \bn) w^{\W} \bx^\bn
 = \sum_{\bn} P_k(w, \bn) \bx^\bn,
\]
where
$\bx = [x_1, x_2, \dots, x_k]$ and $\bx^\bn = x_1^{n_1} \cdots x_k^{n_k}$.
We use the notation $[x^n] f(x)$ to denote the coefficient of
$x^n$ in the series expansion of $f(x)$.
The $m$th falling factorial powers are defined as
$
 x^{(m)} = x(x-1)\dots(x-m+1)
$ for $ m > 0 $,
 with $x^{(0)} = 1$.
 Throughout the paper indeterminate $\w$ is used in generating functions
 to mark the increasing \suc.

\section{Recursive and explicit formulas of $\G_k$} \label{S:Gk}

First we derive recursive relations that
the generating function $\G_k$ and $\HH_k$ (an auxiliary function, defined below)
satisfy.
\begin{theorem}[Recurrence relation for $\G_{k}$] \label{Th:rec}
  The generating function $\G_{k}$ and
  the auxiliary function $\HH_k$ satisfy the following
  system of recursive relations:
\begin{equation}  \label{E:G_H_recur}
\begin{aligned}
\G_{k} &= \G_{k-1} + \dfrac{\displaystyle g_k  (1 + \G_{k-1}) (1 + \HH_{k-1})}
  {\displaystyle 1 - g_k \HH_{k-1}} ,\\
\HH_{k} &= \G_{k-1} + \dfrac{\displaystyle g_k  (\w + \G_{k-1}) (1 + \HH_{k-1})}
  {\displaystyle 1 - g_k \HH_{k-1}}  . 
\end{aligned}
\end{equation}  
with $\G_{1}=g_1$ and $\HH_{1}=\w g_1$. 
\end{theorem}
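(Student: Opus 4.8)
The plan is to prove the recurrence by the symbolic (two-step) method: build every sequence over $\{1,\dots,k\}$ out of sequences over $\{1,\dots,k-1\}$ by inserting the runs of the largest letter $k$, tracking \suc{} through a generating-function bookkeeping. First I would fix the combinatorial meaning of the two functions. I read $\G_{k-1}$ as the generating function of all \emph{nonempty} sequences over $\{1,\dots,k-1\}$, weighted by $\w$ to the number of \suc{} and by $\bx^\bn$ to the letter counts, so that $1+\G_{k-1}$ additionally admits the empty sequence. The auxiliary function $\HH_{k-1}$ I would define as the same sum, except that every sequence \emph{ending in the letter} $k-1$ carries one extra factor of $\w$. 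The role of this extra factor is to pre-record the $(k-1,k)$ succession that will be created the instant a run of $k$'s is appended; here $g_k$ is the building block for a single maximal run of $k$'s (Step 1 of the two-step method), kept symbolic so that the identity holds for whatever single-letter series one later substitutes.

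Next I would decompose. A sequence over $\{1,\dots,k\}$ either contains no $k$, contributing exactly $\G_{k-1}$, or it contains at least one $k$ and then has the unique alternating form $W_0\,K_1\,W_1\cdots K_t\,W_t$ with $t\ge 1$, where each $K_i$ is a maximal run of $k$'s (a factor $g_k$) and each $W_i$ is a sequence over $\{1,\dots,k-1\}$ that must be nonempty for $1\le i\le t-1$ and may be empty for $i=0$ and $i=t$. The decisive accounting observation is that, since consecutive $W_i$'s are separated by runs of $k$'s, no succession spans two different $W_i$'s; hence the total number of \suc{} equals the sum of the \suc{} internal to the $W_i$'s, plus one for each run $K_i$ that is immediately preceded by a letter $k-1$. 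The internal counts are supplied by the $\G_{k-1}$/$\HH_{k-1}$ weight of each $W_i$, and the boundary counts are recorded precisely by the extra $\w$ built into $\HH_{k-1}$ whenever the preceding $W_i$ ends in $k-1$. Grouping the factors as $[W_0K_1]\,[W_1K_2]\cdots[W_{t-1}K_t]\,W_t$ gives $g_k(1+\HH_{k-1})$ for the first unit (with $W_0$ possibly empty), $g_k\HH_{k-1}$ for each of the $t-1$ interior units, and $1+\G_{k-1}$ for the trailing $W_t$; summing the geometric series over $t\ge 1$ produces $\dfrac{g_k(1+\G_{k-1})(1+\HH_{k-1})}{1-g_k\HH_{k-1}}$, which is the stated recurrence for $\G_k$.

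For $\HH_k$ I would run the identical decomposition and change only the weight of the trailing block $W_t$: a sequence over $\{1,\dots,k\}$ ends in $k$ exactly when $W_t$ is empty, so the terminal factor $1+\G_{k-1}$ becomes $\w+\G_{k-1}$, which yields the second recurrence. The base case is immediate, since over $\{1\}$ no succession can occur and every nonempty sequence is a single run, giving $\G_1=g_1$, while every such sequence ends in the maximal letter $1$ and therefore always carries the extra factor, giving $\HH_1=\w g_1$. I expect the only genuinely delicate point to be the correctness of the boundary bookkeeping: verifying that the pre-recorded $\w$ in $\HH_{k-1}$ is converted into a genuine $(k-1,k)$ succession in exactly those sequences where a $k$-run actually follows a $k-1$, while the empty-$W_0$ and empty/nonempty-$W_t$ end cases are weighted correctly, and in particular that the terminal $\w$ carried by $\HH_k$ marks a \emph{potential} future succession rather than an actual one in the current sequence. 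Once this dictionary between the $\w$-markings and the four classes of block boundaries is pinned down, the remainder is the routine geometric summation above; the same block-product structure presumably underlies the matrix $M_k$ used in the paper's own proof.
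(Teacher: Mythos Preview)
Your proposal is correct and constitutes a genuinely different proof from the paper's. The paper never argues combinatorially: it takes as given the matrix expression $\G_k=\be_k\M_k^{-1}\by_k$ from the two-step framework, \emph{defines} the auxiliary function by $\HH_k=\be_k\M_k^{-1}\bz_k$ with $\bz_k=[g_1,\dots,g_{k-1},\w g_k]^T$, writes $\M_k$ in the $(k{-}1)+1$ block form, and applies the Schur-complement inverse identity; multiplying out yields the two recurrences mechanically. Your argument instead works directly on sequences, isolating the maximal runs of the top letter $k$ and encoding the possible $(k{-}1,k)$ boundary succession by the extra $\w$ in $\HH_{k-1}$; this combinatorial reading of $\HH_{k-1}$ is precisely what replacing $\by_{k-1}$ by $\bz_{k-1}$ does in the matrix formula, so the two definitions coincide. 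What your route buys is a self-contained bijective derivation that makes the meaning of $\HH$ transparent and does not presuppose Eq.~\eqref{E:matrix}; what the paper's route buys is a uniform algebraic template that slots into the general transfer-matrix machinery and would apply verbatim to any pattern statistic encoded by a matrix of the same shape.
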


\begin{proof}
Based on the matrix formulation of the two-step approach to the
  distributions of runs
  \citep{Kong2016iwap,Kong2019},
  $\G_{k}$ is given by
  
\begin{equation} \label{E:matrix}
  \G_k = \be_k \M_k^{-1} \by_k,
\end{equation}
where $\be_k = [\underbrace{1, 1, \cdots, 1}_k]$,
$\by_k = [g_1, g_2, \cdots, g_k]^T$,
and
\begin{equation} \label{E:M}
 \M_k =   
  \begin{bmatrix}
           1 & -g_1 \w  & -g_1   & \cdots & -g_1 \\
 -g_2  &  1           & -g_2 \w  & \cdots & -g_2  \\
      \vdots & \vdots       & \vdots       & \vdots & \vdots      \\
 -g_{k-1}  & \cdots & -g_{k-1}   & 1        & -g_{k-1} \w \\
 -g_k     & \cdots & -g_k      & -g_k  & 1 
   \end{bmatrix}.
\end{equation}
We define $\HH_k$ as
\begin{equation} \label{E:H}
  \HH_k = \be_k \M_k^{-1} \bz_k,
\end{equation}
where $\bz_k = [g_1, g_2, \cdots, g_{k-1}, \w g_k]^T $.

The matrix $\M_k$ can be written in a block form as
\[
\M_k = \begin{bmatrix}
  \M_{k-1}   & \bb \\
  \bc          & 1
  \end{bmatrix}
\]
where $\bb = [-g_1, -g_2, \cdots, -g_{k-1} \w]^T = -\bz_{k-1}$,
and
$\bc = [-g_k, -g_k, \cdots, -g_k ] = -g_k \mathbf{e}_{k-1}$ .
Using matrix identity
\[
\begin{bmatrix}
  \A         & \mathbf{b} \\
  \mathbf{c} & d  
\end{bmatrix}^{-1}
= \begin{bmatrix}
  \A^{-1} + \frac{1}{\id} \A^{-1} \bb \bc \A^{-1} & -\frac{1}{\id} \A^{-1} \bb \\
  -\frac{1}{\id} \bc  \A^{-1} & \frac{1}{\id}
  \end{bmatrix}
\]
where $\id = d - \bc \A^{-1} \bb$,
we obtain for $\G_k$
\[
\G_{k} =  \be_k \M_k^{-1} \by_k
= [\be_{k-1}, 1]
\begin{bmatrix}
  \A^{-1} + \frac{1}{\id} \A^{-1} \bb \bc \A^{-1} & -\frac{1}{\id} \A^{-1} \bb \\
  -\frac{1}{\id} \bc  \A^{-1} & \frac{1}{\id}
\end{bmatrix}
\begin{bmatrix}
  \by_{k-1} \\
  g_k
  \end{bmatrix} ,
\]
and for $\HH_{k}$
\[
\HH_{k} =  \be_k \M_k^{-1} \bz_k
= [\be_{k-1}, 1]
\begin{bmatrix}
  \A^{-1} + \frac{1}{\id} \A^{-1} \bb \bc \A^{-1} & -\frac{1}{\id} \A^{-1} \bb \\
  -\frac{1}{\id} \bc  \A^{-1} & \frac{1}{\id}
\end{bmatrix}
\begin{bmatrix}
  \by_{k-1} \\
  \w g_k
  \end{bmatrix} .
\]
The system of  recurrences of Eq.\eqref{E:G_H_recur}
can be obtained by multiplying out the above two matrix equations. 

\end{proof}

The system of recurrences in Theorem~\ref{Th:rec}
can be solved to obtain the explicit formula for $\G_k$
(as well as $\HH_k$) in terms of $g_i$.
\begin{theorem}[Explicit formula for $\G_{k}$] \label{Th:explicit}
The explicit formula of $\G_k$ and $\HH_k$ of Theorem~\ref{Th:rec} are given by:
\begin{equation} \label{E:explicit}
\begin{aligned}
\G_k &= \dfrac{\displaystyle 1}
             {\displaystyle 1 - \sum_{i=1}^k \sum_{l=0}^{i-1} (\w-1)^l \prod_{j=i-l}^i f_j} - 1 ,\\
 \HH_k &= \dfrac{\displaystyle \sum_{l=0}^k (\w-1)^l \prod_{j=k-l+1}^k f_j}
             {\displaystyle 1 - \sum_{i=1}^k \sum_{l=0}^{i-1} (\w-1)^l \prod_{j=i-l}^i f_j} - 1 
\end{aligned}
\end{equation}
where
\[
 f_i = \frac{g_i}{1+g_i} .
\]
\end{theorem}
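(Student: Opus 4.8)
The plan is to prove both formulas simultaneously by induction on $k$, after recasting them into a form in which the recurrences of Theorem~\ref{Th:rec} are easiest to apply. Writing $D_k$ for the common denominator and $N_k$ for the numerator of the $\HH_k$ expression,
\[
D_k = 1 - \sum_{i=1}^k \sum_{l=0}^{i-1} (\w-1)^l \prod_{j=i-l}^i f_j, \qquad N_k = \sum_{l=0}^k (\w-1)^l \prod_{j=k-l+1}^k f_j,
\]
the claimed formulas are equivalent to the pair of identities $1 + \G_k = 1/D_k$ and $1 + \HH_k = N_k/D_k$. This reformulation is natural because the system \eqref{E:G_H_recur} simplifies dramatically when written for the shifted quantities $1+\G_k$ and $1+\HH_k$ rather than for $\G_k$ and $\HH_k$ directly.

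The first step is to record two scalar recurrences for $D_k$ and $N_k$ that follow from their definitions alone. Splitting off the $i=k$ term of the double sum defining $D_k$, factoring out $f_k$, and recognising the remaining single sum as $N_{k-1}$ yields
\[
D_k = D_{k-1} - f_k N_{k-1}.
\]
Likewise, separating the $l=0$ term of $N_k$ and reindexing the rest gives
\[
N_k = 1 + (\w-1) f_k N_{k-1}.
\]
Together with the base values $D_1 = 1/(1+g_1)$ and $N_1 = 1 + (\w-1)f_1$, these two recurrences carry all the content and reduce the theorem to checking that $1/D_k$ and $N_k/D_k$ satisfy the system of Theorem~\ref{Th:rec}.

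For the base case $k=1$ one verifies that $1/D_1 = 1+g_1$ and $N_1/D_1 = 1 + \w g_1$, reproducing $\G_1 = g_1$ and $\HH_1 = \w g_1$. For the inductive step I would substitute the hypotheses $1 + \G_{k-1} = 1/D_{k-1}$ and $\HH_{k-1} = (N_{k-1}-D_{k-1})/D_{k-1}$ into the first line of \eqref{E:G_H_recur}. Factoring $(1+\G_{k-1})$ out of $1+\G_k$ leaves the bracket $1 + g_k(1+\HH_{k-1})/(1-g_k\HH_{k-1}) = (1+g_k)/(1-g_k\HH_{k-1})$; simplifying the denominator $1 - g_k\HH_{k-1}$ to the form $(1+g_k)D_k/D_{k-1}$ and then converting $g_k$ to $f_k = g_k/(1+g_k)$ collapses the whole expression to $1/D_k$, exactly as $D_k = D_{k-1} - f_k N_{k-1}$ predicts. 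For $\HH_k$ I would write $\w + \G_{k-1} = (\w-1) + (1+\G_{k-1})$ in the second line of \eqref{E:G_H_recur}: the $(1+\G_{k-1})$ part reassembles precisely the quantity already shown to equal $1+\G_k = 1/D_k$, while the residual $(\w-1)$ term reduces to $(\w-1)f_k N_{k-1}/D_k$, giving $1 + \HH_k = (1 + (\w-1)f_k N_{k-1})/D_k = N_k/D_k$ by the recurrence for $N_k$.

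The main obstacle is the bookkeeping in the first step, namely reindexing the nested sums carefully enough that the leftover pieces are correctly identified as $N_{k-1}$ (in the $D_k$ recurrence) and as a shifted copy of $N_{k-1}$ (in the $N_k$ recurrence). Once these two scalar recurrences are secured, the inductive verification is a routine algebraic simplification, driven by the repeated appearance of the factor $(1+g_k)D_k/D_{k-1}$ coming from $1 - g_k\HH_{k-1}$, which is what makes every term cancel cleanly.
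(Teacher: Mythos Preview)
Your proposal is correct and follows precisely the induction route the paper itself indicates (verify \eqref{E:explicit} by induction on $k$ using the recurrences of Theorem~\ref{Th:rec}); your auxiliary identities $D_k=D_{k-1}-f_kN_{k-1}$ and $N_k=1+(\w-1)f_kN_{k-1}$, together with the key simplification $1-g_k\HH_{k-1}=(1+g_k)D_k/D_{k-1}$, are exactly the computations that make the inductive step go through, and the paper merely omits these details.
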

\begin{proof}
  The formulas for $\G_k$ and $\HH_k$ can be proved by induction using the
  recurrences in Theorem~\ref{Th:rec}.
Alternatively, Lemma~\ref{L:det_M_k} in Appendix~\ref{A:DM} can be used.
\end{proof}

As a check, when $\w=1$, the only nonvanishing terms in the
denominator of $\G_k$ in Eq.~\eqref{E:explicit} are those when
$l=0$, which leads to
\[
\G_k = \dfrac{\displaystyle 1}
	{\displaystyle 1 - \sum_{i=1}^k f_i} - 1.
\]
This is just the generating function for enumerating the systems with $k$ kinds of
objects, as shown previously~\citep{Kong2006}.

If we don't put any restrictions on the number of integers that can appear in the sequence,
then we use for each integer the following generating function $g_i$:
 \[
 g_i = \sum_{j=1}^{\infty} x_i^j = \frac{x_i}{1-x_i}.
 \]
In this case, $f_i = x_i$,
hence we have
\begin{corollary} \label{C:G}
  When
  \[
    g_i = \sum_{j=1}^{\infty} x_i^j = \frac{x_i}{1-x_i}, \quad 1 \le i \le k,
  \]
the  generating function $\G_k$ is given by  
\begin{equation} \label{E:G_in_x}
 \G_k = \dfrac{\displaystyle 1}
             {\displaystyle 1 - \sum_{i=1}^k \sum_{l=0}^{i-1} (\w-1)^l \prod_{j=i-l}^i x_j} - 1 .
           \end{equation}
\end{corollary}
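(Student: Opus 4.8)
The plan is to obtain the corollary as a direct specialization of the explicit formula for $\G_k$ established in Theorem~\ref{Th:explicit}. That theorem already expresses $\G_k$ in closed form in terms of the auxiliary quantities $f_i = g_i/(1+g_i)$, so the only remaining work is to evaluate these $f_i$ under the prescribed choice of generating functions $g_i$ and to substitute the result back into Eq.~\eqref{E:explicit}.

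First I would record that the stated $g_i = \sum_{j=1}^{\infty} x_i^j = x_i/(1-x_i)$ is the generating function that places no upper bound on the multiplicity of the integer $i$, each occurrence being marked by $x_i$. The key step is then the computation of $f_i$: substituting $g_i = x_i/(1-x_i)$ into $f_i = g_i/(1+g_i)$ and clearing the common factor $1-x_i$ from numerator and denominator gives $f_i = x_i$. This single algebraic simplification is the heart of the argument, and everything else is formal.

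Finally, with $f_j = x_j$ for every $j$, I would replace each product $\prod_{j=i-l}^i f_j$ appearing in the denominator of the expression for $\G_k$ in Eq.~\eqref{E:explicit} by $\prod_{j=i-l}^i x_j$, leaving the outer structure (the double sum over $i$ and $l$ and the powers of $\w-1$) untouched, which yields exactly Eq.~\eqref{E:G_in_x}. There is no genuine obstacle here: the corollary is an immediate consequence of Theorem~\ref{Th:explicit}, and the proof reduces to verifying the identity $f_i = x_i$ and confirming that the substitution commutes with the summation and product operations, which it does because those operations involve the $g_i$ only through the $f_i$.
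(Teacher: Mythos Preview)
Your proposal is correct and matches the paper's own argument essentially verbatim: the paper simply observes that with $g_i = x_i/(1-x_i)$ one has $f_i = x_i$, and then substitutes this into the explicit formula of Theorem~\ref{Th:explicit} to obtain Eq.~\eqref{E:G_in_x}. There is nothing to add.
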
           
For example, when $k=3$ and $f_i=x_i$, we get
\begin{scriptsize}
  \begin{equation} \label{E:G_3}
\G_3 = \frac{1}{1
  - x_1-x_2-x_3
  + x_1 x_2
  + x_2 x_3 
  - x_1 x_2 x_3
  -x_1x_2 \w
  -x_2x_3 \w
  + 2  x_1 x_2 x_3 \w
  -x_1 x_2 x_3 \w^2}
 - 1.
\end{equation}
\end{scriptsize}
Expanding $\G_3$ and taking the coefficient of 
$\bx^\bn = x_1 x_2^2 x_3^2$ gives the generating function of $\s_3(\W; [1,2,2])$:
\begin{equation} \label{E:Johnson2002}
 P_3(w, [1,2,2]) = [x_1 x_2^2 x_3^2] \G_3 =  7+12 \w + 9 \w^2 + 2 \w^3,
\end{equation}
which shows that there are $7, 12, 9$, and $2$ sequences with
$0, 1, 2$, and $3$ \suc{} respectively, out of total $30$ sequences for the specification $[1,2,2]$.
This is the example given by \citet[p.73]{Johnson2002}.
An example for a slightly bigger system (again with $k=3$),
with specification $[2,3,4]$, is given by
\[
 P_3(w, [2,3,4]) = [x_1^2 x_2^3 x_3^4] \G_3 = 93 + 330 \w + 435 \w^2 + 300 \w^3 + 90 \w^4 + 12 \w^5 .
\]

The generating function shown in the form of Eq.~\eqref{E:G_in_x} leads to the following
recurrence relation of the generating functions $P_k(w, \bn)$ and $\s_k(\W; \bn)$
with respect to the number of a particular integer
when the numbers of other integers are fixed.
\begin{theorem}[Recurrence relation for $P_k(w, \bn)$ and $\s_k(\W; \bn)$] \label{Th:P_recur}
  When all the numbers of integers $n_\ell$ are fixed, except for $n_i$ of the $i$th integer,
  the generating functions $P_k(w, \bn)$ satisfy the following recurrence:
  \begin{equation} \label{E:P_recur}
    \sum_{j=0}^d (-1)^j \binom{d}{j} P_k(w, [ n_1, \dots, n_{i}+j, \dots, n_k ]) = 0, \qquad n_i \ge 1,
  \end{equation}
  where the order of recurrence $d = n+1-n_i$.
  
  The number of sequences with
  exactly $\W$ \suc{} $\s_k(\W; \bn)$ also follows a similar recurrence:
  \begin{equation} \label{E:s_recur}
    \sum_{j=0}^d (-1)^j \binom{d}{j} \s_k(\W;  [ n_1, \dots, n_{i}+j, \dots, n_k ]) = 0, \qquad n_i \ge 1.
  \end{equation}
\end{theorem}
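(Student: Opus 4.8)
The plan is to read the recurrence straight off the explicit form of $\G_k$ in Corollary~\ref{C:G}, Eq.~\eqref{E:G_in_x}, and the crucial structural observation is that the denominator
\[
D = 1 - \sum_{i'=1}^k \sum_{l=0}^{i'-1} (\w-1)^l \prod_{j=i'-l}^{i'} x_j
\]
is \emph{multilinear}: each monomial $\prod_{j=i'-l}^{i'} x_j$ is a product of distinct (consecutive) indeterminates, so every variable, and in particular $x_i$, occurs to at most the first power. Holding all variables except $x_i$ as formal parameters I can therefore write $D = A - B\,x_i$, where $A = D|_{x_i=0}$ and $B = -[x_i^1]D$ do not involve $x_i$. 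A brief inspection of the monomials shows that both $A$ and $B$ have constant term $1$: setting $x_i=0$ in $D$ leaves the leading $1$ plus terms of positive degree, while the single monomial $-x_i$ coming from $i'=i,\,l=0$ supplies the constant $1$ in $B$.

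Next I would extract the power of $x_i$. Since $A$ is a unit,
\[
\G_k = \frac{1}{A - B\,x_i} - 1 = \frac{1}{A}\sum_{m\ge0}\Bigl(\frac{B}{A}\Bigr)^{m} x_i^{m} - 1 ,
\]
so that for $n_i\ge1$ one has $[x_i^{n_i}]\G_k = B^{n_i}/A^{n_i+1} = A^{-1}(1+C)^{n_i}$, where $C := B/A - 1$. Because the constant terms of $A$ and $B$ cancel, $C$ vanishes at the origin, i.e. $C = O(x)$ in the variables $\{x_\ell:\ell\neq i\}$. Expanding $(1+C)^{n_i} = \sum_{m\ge0}\binom{n_i}{m}C^m$ and using that $C^m$ has total degree at least $m$, only the terms $m = 0,1,\dots,N$ with $N := \sum_{\ell\neq i} n_\ell$ can contribute to the coefficient of $\prod_{\ell\neq i} x_\ell^{n_\ell}$. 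Since $P_k(w,\bn)=\bigl[\textstyle\prod_{\ell\neq i} x_\ell^{n_\ell}\bigr][x_i^{n_i}]\G_k$, this yields
\[
P_k(w,\bn) = \sum_{m=0}^{N}\binom{n_i}{m}\,\gamma_m ,
\qquad
\gamma_m := \Bigl[\textstyle\prod_{\ell\neq i} x_\ell^{n_\ell}\Bigr]\frac{C^m}{A} ,
\]
where each $\gamma_m$ depends on $\w$ and on $\{n_\ell:\ell\neq i\}$ but \emph{not} on $n_i$. Thus, with the other counts fixed, $P_k(w,\bn)$ is a polynomial in $n_i$ of degree at most $N = d-1$, recalling that $d = n+1-n_i = N+1$.

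The recurrence is then immediate. The operator $\sum_{j=0}^d(-1)^j\binom{d}{j}(\,\cdot\,)\big|_{n_i\to n_i+j}$ equals, up to the factor $(-1)^d$, the $d$-th forward difference in $n_i$, and the $d$-th difference annihilates every polynomial of degree less than $d$; since $\deg_{n_i}P_k \le d-1$, Eq.~\eqref{E:P_recur} holds for all $n_i\ge1$. Equation~\eqref{E:s_recur} then follows by taking the coefficient of $\w^{\W}$ in Eq.~\eqref{E:P_recur}: the multipliers $(-1)^j\binom{d}{j}$ and the order $d=N+1$ are free of $\w$, so from $P_k(w,\bn)=\sum_{\W}\s_k(\W;\bn)\w^{\W}$ the vanishing combination for $P_k$ forces the same combination to vanish coefficientwise for $\s_k$.

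I expect the only genuine work to lie in the first step, namely certifying the multilinearity of $D$ and pinning down the constant terms of $A$ and $B$ so that $C$ has no constant term; everything afterward — the geometric expansion, the binomial degree count, and the finite-difference identity — is routine. One point I would emphasize for correctness is that $N=\sum_{\ell\neq i}n_\ell$, and hence the order $d=N+1$, is unchanged as $n_i$ varies, so the recurrence \eqref{E:P_recur} has constant order along the chain $n_i, n_i+1,\dots$, exactly as stated.
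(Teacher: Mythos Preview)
Your argument is correct, and it rests on the same key observation as the paper---that the denominator in Eq.~\eqref{E:G_in_x} is multilinear, so in particular linear in $x_i$---but the route you take from there is organized differently. The paper fixes $i=k$, writes the denominator in the upper-triangular form $h_1x_1+\cdots+h_{k-1}x_{k-1}+(1-x_k)$ with $h_j$ depending only on $w,x_{j+1},\dots,x_k$, and extracts the coefficients of $x_1^{n_1},\dots,x_{k-1}^{n_{k-1}}$ in that order; this leaves a rational function $Q_{k-1}(w,x_k)/(1-x_k)^d$ with $\deg_{x_k}Q_{k-1}<d$, and the recurrence is then read off from the standard theory of rational generating functions (Stanley, Chapter~4). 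You instead extract $x_i$ \emph{first}, obtaining $B^{n_i}/A^{n_i+1}=A^{-1}(1+C)^{n_i}$, and use a total-degree bound on $C^m$ to show directly that $P_k(w,\bn)$ is a polynomial in $n_i$ of degree at most $N=d-1$; the finite-difference identity then finishes. Your version is more symmetric in the choice of $i$ (no special ordering of the other variables is needed) and is self-contained, avoiding the citation to Stanley; the paper's stepwise extraction, on the other hand, makes the rational-function structure in the distinguished variable fully explicit. Both derivations of Eq.~\eqref{E:s_recur} from Eq.~\eqref{E:P_recur} are identical.
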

\begin{proof}
  Without loosing generality we choose $i=k$.
  From Eq.~\eqref{E:G_in_x} we see that $\G_k$ can be written in a form of
  \begin{equation} \label{E:G_f}
    \G_k = \frac{1}{\fun_1x_1 + \fun_2x_2 + \cdots + \fun_{k-1}x_{k-1}   +1-x_k} - 1,
    \end{equation}
    where $\fun_1$ is a polynomial in $w, x_2, \dots, x_k$,
    $\fun_2$ is a polynomial in $w, x_3, \dots, x_k$, etc.,
    and $\fun_{k-1}$ is a polynomial in $w$ and $x_k$.
    From Eq.~\eqref{E:G_f} we can extract coefficients of $x_1^{n_1}, x_2^{n_2}, \dots, x_{k-1}^{n_{k-1}}$
    stepwisely,
    \begin{align*}
      [x_1^{n_1}] \G_k          =& \frac{(-1)^{n_1} \fun_1^{n_1}} { (\fun_2x_2 + \cdots + \fun_{k-1}x_{k-1}   +1-x_k )^{n_1 + 1}}, \\
      [x_1^{n_1}x_2^{n_2}] \G_k  =& \frac{ Q_2(w, x_3, \dots, x_k) } { (\fun_3x_3 + \cdots + \fun_{k-1}x_{k-1}   +1-x_k )^{n_1 + n_2 + 1}}, \\
                                & \vdots \\
      [x_1^{n_1}x_2^{n_2} \cdots x_{k-1}^{n_{k-1}}] \G_k =& \frac{Q_{k-1}(w, x_k)}{(1-x_k)^{n_1 + n_2 + \cdots + n_{k-1} + 1}},
    \end{align*}
    where $Q_1(w, x_2, , \dots, x_k) = (-1)^{n_1} \fun_1^{n_1}$ is a polynomial function in $w, x_2, , \dots, x_k$,
    $Q_2(w, x_3, , \dots, x_k)$ is a polynomial function in $w, x_3, , \dots, x_k$, and so on,
    and in the last equation $Q_{k-1}(w, x_k)$ is a polynomial in $w$ and $x_k$.
    Since in the denominator of $\G_k$ of Eq.~\eqref{E:G_in_x}
    the degree in each $x_i$ of the monomial terms is $1$,
    the degree of $Q_\ell$ when considered as a polynomial in $x_j$ ($1 \le \ell \le k-1$, $\ell+1 \le j \le k$) 
    is less than the degree of the denominator as a polynomial in $x_j$, which is $n_1+n_2+\cdots+n_\ell+1$.
    In particular, the degree of $Q_{k-1}(w, x_k)$ when considered as a polynomial in $x_k$ is less than $d$.
    From the last equation we obtain the recurrence Eq.~\eqref{E:P_recur}
    \cite[Chapter 4]{stanley_2011}.

    Comparing the coefficients of $w$ of Eq.~\eqref{E:P_recur} we obtain Eq.~\eqref{E:s_recur}.
  \end{proof}
This theorem gives an efficient method to calculate the number of sequences with
  exact $r$ \suc{} for a given specification $\bn$.  
An example of $k=3$ that includes specification $[1,2,2]$ shown above (Eq.~\eqref{E:Johnson2002}) will illustrate this theorem.
If we choose to take the recurrence with respect to the number of the second integer ($n_2$)
and keep $n_1=1$ and $n_3=2$ fixed, we have, for the first
few instances,
\begin{align*}
P_3(w, [1, 1, 2]) &= 5+5w+2w^2 , \\
P_3(w, [1, 2, 2]) &= 7+12w+9w^2+2w^3, \\
P_3(w, [1, 3, 2]) &= 9+21w+21w^2+9w^3, \\
P_3(w, [1, 4, 2]) &= 11+32w+38w^2+24w^3, \\
P_3(w, [1, 5, 2]) &= 13+45w+60w^2+50w^3 .
\end{align*}
These generating functions $P_3(w, \bn)$ satisfy a recurrence of order $d=4$ as specified in Eq.~\eqref{E:P_recur}:
\begin{multline*}
  \binom{4}{0} P_3(w, [1, 1, 2]) - \binom{4}{1} P_3(w, [1, 2, 2]) + \binom{4}{2} P_3(w, [1, 3, 2]) \\
  - \binom{4}{3} P_3(w, [1, 4, 2])  + \binom{4}{4} P_3(w, [1, 5, 2]) = 0.
  \end{multline*}
As a check for $\s_k(\W; [1,2,2])$
with $\W=2$, we extract coefficients of $w^2$ from the above equations and have
$\binom{4}{0} \times 2 - \binom{4}{1} \times 9 + \binom{4}{2} \times 21 - \binom{4}{3} \times 38 + \binom{4}{4} \times 60 = 0$.

\section{Mean and variance} \label{S:mean_var}

As shown in the last section, when there is no 
restriction on the number of integers that can appear in the sequence,
the generating function is given by Eq.~\eqref{E:G_in_x}.
From this generating function the mean and variance of
the number of \suc{} can be obtained.
First we prove a simple lemma.
If we write Eq.~\eqref{E:G_in_x} as
\begin{equation} \label{E:G_in_x_2}
 \G_k = \frac{1}{1-f(\w)} - 1
\end{equation}
with
\[
  f(\w) = \sum_{i=1}^k \sum_{l=0}^{i-1} (\w-1)^l \prod_{j=i-l}^i x_j,
\]
we have

\begin{lemma} \label{L:S}
  \[
   \frac{\pd^m  f(\w) }{\pd\w^m} \Big |_{\w=1} = m! S_{m}, \quad 0 \le m \le k-1 ,
   \]
   where
   \[
    S_{m} = \sum_{i=1}^{k-m}  x_{i} x_{i+1} \cdots x_{i+m}  .
   \]
\end{lemma}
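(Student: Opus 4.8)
The plan is to differentiate $f(\w)$ term by term and exploit the fact that it is a finite sum whose $i,l$-summand is a monomial coefficient times the power $(\w-1)^l$. The key observation I would make first is that the operation ``differentiate $m$ times in $\w$, then set $\w=1$'' acts as a filter that isolates the single power $l=m$: since
\[
\frac{\pd^m}{\pd\w^m}(\w-1)^l = \frac{l!}{(l-m)!}\,(\w-1)^{l-m}\quad(l \ge m),
\]
and this derivative vanishes identically when $l<m$, evaluating at $\w=1$ kills every term except the one with $l=m$, which contributes exactly $m!$ times its coefficient because $(\w-1)^{0}=1$.

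Next I would apply this filter to the double sum $f(\w)=\sum_{i=1}^k\sum_{l=0}^{i-1}(\w-1)^l\prod_{j=i-l}^i x_j$. Within the inner sum the value $l=m$ is attained precisely when $m\le i-1$, i.e.\ when $i\ge m+1$, and the accompanying coefficient is the product of consecutive indeterminates $\prod_{j=i-m}^i x_j = x_{i-m}x_{i-m+1}\cdots x_i$. Collecting the surviving terms therefore gives
\[
\frac{\pd^m f(\w)}{\pd\w^m}\Big|_{\w=1}
= m!\sum_{i=m+1}^k x_{i-m}x_{i-m+1}\cdots x_i .
\]
A reindexing $i\mapsto i-m$, under which the summation index now runs from $1$ to $k-m$, rewrites the right-hand side as $m!\sum_{i=1}^{k-m} x_i x_{i+1}\cdots x_{i+m}=m!\,S_{m}$, which is the claimed identity; the hypothesis $m\le k-1$ is exactly what guarantees $k-m\ge 1$, so that $S_m$ is a nonempty sum of products of $m+1$ consecutive variables.

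I do not expect a genuine obstacle here, as the argument is a direct term-by-term computation. The only point demanding care is the index bookkeeping: I would verify that the constraint $l\le i-1$ on the inner summation is precisely what forces $i\ge m+1$ once $l=m$ is selected, and that the subsequent shift $i\mapsto i-m$ correctly produces the consecutive products $x_i x_{i+1}\cdots x_{i+m}$ defining $S_m$. As an alternative route, one could expand $(\w-1)^l$ via the binomial theorem and read off the coefficient of $(\w-1)^m$ directly, but the filtering argument above is cleaner and avoids any intermediate binomial sums.
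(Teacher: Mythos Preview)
Your proposal is correct and follows essentially the same approach as the paper's own proof: both differentiate term by term, observe that the $m$th derivative of $(\w-1)^l$ contributes $l^{(m)}(\w-1)^{l-m}$, which survives at $\w=1$ only when $l=m$, and then collect the resulting sum $m!\sum_{i=m+1}^k\prod_{j=i-m}^i x_j = m!S_m$. Your write-up is simply a more explicit version of the paper's terse argument, including the reindexing step the paper leaves implicit.
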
    
\begin{proof}
  The term involving $\w-1$ of $\frac{\pd^m  f(\w) }{\pd\w^m}$ is of the form
  $l^{(m)} (\w-1)^{l-m}$.
  This term does not vanish at $\w=1$ only when $l=m$.
  Hence
  \[
    \frac{\pd^m  f(\w) }{\pd\w^m} \Big |_{\w=1} = m! \sum_{i=m+1}^k \prod_{j=i-m}^i x_j = m! S_m.
  \]
\end{proof}
  
\begin{theorem}[Mean of \suc{}]  \label{Th:mean}
  When there is no restriction on the number of integers in the sequences,
  the mean of \suc{} for a specification of $\bn=[n_1, n_2, \dots, n_k]$ is given by
  \begin{equation} \label{E:mean}
    \mu_k(\bn) = \frac{n_1n_2 + n_2n_3 + \cdots + n_{k-1} n_k}{n}
    \end{equation}
    where $n=\sum_{i=1}^k n_i$.
\end{theorem}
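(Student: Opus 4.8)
The plan is to obtain the mean as a logarithmic-derivative-type ratio read off from the generating function. Since $P_k(\w,\bn)=\sum_\W \s_k(\W;\bn)\w^\W$ is the ordinary generating function of the succession count for the fixed specification $\bn$, the mean number of \suc{} is
\[
  \mu_k(\bn)=\frac{\sum_\W \W\,\s_k(\W;\bn)}{\sum_\W \s_k(\W;\bn)}
           =\frac{\left.\pd_\w P_k(\w,\bn)\right|_{\w=1}}{P_k(1,\bn)}.
\]
Because $P_k(\w,\bn)=[\bx^\bn]\,\G_k$, both numerator and denominator are coefficients of $\bx^\bn$ in $\G_k$ and in $\pd_\w\G_k$ evaluated at $\w=1$, so the whole proof reduces to two coefficient extractions.

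First I would evaluate the denominator. Setting $\w=1$ kills every term of $f(\w)$ with $l\ge 1$, leaving $f(1)=\sum_{i=1}^k x_i$, so by Eq.~\eqref{E:G_in_x_2},
\[
  \G_k\big|_{\w=1}=\frac{1}{1-\sum_{i=1}^k x_i}-1 .
\]
Extracting the coefficient of $\bx^\bn$ from $1/(1-\sum_i x_i)$ via the multinomial theorem gives $P_k(1,\bn)=\frac{n!}{n_1!\cdots n_k!}$, the total number of sequences of specification $\bn$ (the $-1$ only shifts the constant term).

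Next I would handle the numerator. Differentiating $\G_k=1/(1-f(\w))-1$ once gives $\pd_\w\G_k=f'(\w)/(1-f(\w))^2$, and Lemma~\ref{L:S} with $m=1$ yields $f'(1)=S_1=\sum_{i=1}^{k-1}x_ix_{i+1}$. Hence
\[
  \left.\pd_\w\G_k\right|_{\w=1}
  =\frac{\sum_{i=1}^{k-1}x_ix_{i+1}}{\bigl(1-\sum_{i=1}^k x_i\bigr)^2}.
\]
Using the expansion $1/(1-\sum_i x_i)^2=\sum_{\mathbf m}(|\mathbf m|+1)\frac{|\mathbf m|!}{m_1!\cdots m_k!}\,\bx^{\mathbf m}$, each monomial $x_ix_{i+1}$ shifts the required coefficient to the exponent vector $\bn-\be_i-\be_{i+1}$, whose total degree is $n-2$. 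A short factorial computation then gives the contribution of the $i$th term as $\frac{n_in_{i+1}}{n}\cdot\frac{n!}{n_1!\cdots n_k!}$: lowering $n_i$ and $n_{i+1}$ each by one produces the numerator factor $n_in_{i+1}$, the ratio $(n-2)!/n!=1/[n(n-1)]$ contributes $1/[n(n-1)]$, and the factor $|\mathbf m|+1=n-1$ from the expansion cancels one power of $(n-1)$.

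Summing over $i$ and dividing by $P_k(1,\bn)=n!/(n_1!\cdots n_k!)$ causes the common multinomial factor to cancel, leaving exactly $\mu_k(\bn)=(n_1n_2+\cdots+n_{k-1}n_k)/n$, as claimed. The only step requiring care is the factorial bookkeeping in the numerator extraction; everything else is an immediate consequence of Lemma~\ref{L:S} and the multinomial theorem, so I expect that bookkeeping — in particular confirming the clean cancellation of $(n-1)$ — to be the main, though routine, obstacle.
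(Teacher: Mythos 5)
Your proposal is correct and follows essentially the same route as the paper: differentiate $\G_k = 1/(1-f(\w))-1$, apply Lemma~\ref{L:S} to get $\left.\pd_\w \G_k\right|_{\w=1} = S_1/(1-S_0)^2$, extract the coefficient of $\bx^\bn$, and divide by the multinomial count $n!/(n_1!\cdots n_k!)$. Your factorial bookkeeping (the factor $n_in_{i+1}$ from lowering two exponents, $1/[n(n-1)]$ from the degree shift, and the cancelling $(n-1)$ from the squared denominator) matches the paper's coefficient extraction exactly; the only cosmetic difference is that you also derive the denominator $P_k(1,\bn)$ from $\G_k|_{\w=1}$, which the paper simply cites as the known total number of sequences.
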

\begin{proof}
  By taking the derivative of $\G_k$ in Eq.~\eqref{E:G_in_x} or Eq.~\eqref{E:G_in_x_2} with respect to $\w$
  and then letting $\w=1$, and using Lemma~\ref{L:S},
  we obtain
  \begin{equation} \label{E:d1}
  \frac{ \pd \G_k} {\pd \w} \Big |_{\w=1} =
  \frac{\sum_{i=1}^{k-1} x_i x_{i+1}}{(1 - \sum_{i=1}^{k} x_i)^2} = \frac{S_1}{(1-S_0)^2}.
  \end{equation}
  Expanding the last equation, we can extract the coefficients of $\bx^{\bn}$ as
  \begin{multline*}
    [\bx^{\bn} ] \frac{ \pd \G_k} {\pd \w} \Big |_{\w=1}
    = (n-1)!
    \left[
        \frac{1}{(n_1-1)! (n_2 - 1)! n_3! \cdots n_k!}
      + \frac{1}{n_1! (n_2 - 1)! (n_3-1)! \cdots n_k!}   \right.\\
     \left. + \cdots
      + \frac{1}{n_1!n_2! \cdots (n_{k-1} -1 )! (n_{k} -1 )!}
      \right] .
  \end{multline*}
  Dividing the above equation
  by the total number of sequences $n!/(n_1! n_2! \cdots n_k!)$, we obtain
  the mean of \suc{} in Eq.\eqref{E:mean}.
\end{proof}

Similarly, the variance can be calculated by first
taking the second order derivative
of $\G_k$ and setting $\w=1$ to  get the second factorial moment $E[\WW(\WW-1)]$,
and then use the relation
\[
 \var_k^2(\bn) = E[\WW(\WW-1)] - \mu_k(\bn)^2 + \mu_k(\bn).
\]
The proof is similar to
that of Theorem~\ref{Th:mean} but with more complicated manipulations.
In the end, a compact formula for the variance can be obtained.

\begin{theorem}[The second factorial moment and variance] \label{Th:e12_var} 
  When there is no restriction on the number of integers in the sequences,
  the second factorial moment and variance of \suc{} for a specification of $\bn=[n_1, n_2, \dots, n_k]$
  are given by
  \begin{equation}  \label{E:e12}
  E[\WW(\WW-1)] = 
  \frac{1}{n(n-1)} 
  \left[
    T_1^2 - \sum_{i=1}^{k-1} n_i n_{i+1} (n_{i} + n_{i+1} - 1) 
    \right] ,
  \end{equation}
  and
  \begin{equation}  \label{E:var}
    \var_k^2(\bn) = \frac{1}{n^2(n-1)}  T_1^2
    + \frac{1}{n-1} T_1
    - \frac{1}{n(n-1)}  \sum_{i=1}^{k-1} n_i n_{i+1} (n_{i} + n_{i+1} ) 
  \end{equation}
  where
  \[
   T_1 = \sum_{i=1}^{k-1} n_i n_{i+1} .
  \]
\end{theorem}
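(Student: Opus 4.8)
The plan is to mirror the route taken for the mean in Theorem~\ref{Th:mean}: differentiate $\G_k$ twice with respect to $\w$, evaluate at $\w=1$ using Lemma~\ref{L:S}, extract the coefficient of $\bx^\bn$, and divide by the total number of sequences $n!/(n_1!\cdots n_k!)$ to obtain the second factorial moment $E[\WW(\WW-1)]$. Writing $\G_k = (1-f(\w))^{-1}-1$ as in Eq.~\eqref{E:G_in_x_2}, the chain rule gives
\[
\frac{\pd^2 \G_k}{\pd\w^2} = \frac{f''}{(1-f)^2} + \frac{2(f')^2}{(1-f)^3}.
\]
By Lemma~\ref{L:S} we have $f(1)=S_0$, $f'(1)=S_1$, and $f''(1)=2S_2$, so that
\[
\frac{\pd^2 \G_k}{\pd\w^2}\Big|_{\w=1} = \frac{2S_2}{(1-S_0)^2} + \frac{2S_1^2}{(1-S_0)^3}.
\]
First I would read off $[\bx^\bn]$ of each term using the expansion $(1-S_0)^{-p}=\sum_{m\ge0}\binom{m+p-1}{p-1}S_0^m$ together with the multinomial theorem applied to $S_0^m=(\sum_i x_i)^m$.

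The $2S_2/(1-S_0)^2$ term is routine: each monomial $x_ix_{i+1}x_{i+2}$ of $S_2$ lowers three exponents by one, forcing $m=n-3$ with weight $(n-2)$, and after normalization it contributes $\frac{2}{n(n-1)}\sum_{i=1}^{k-2} n_i n_{i+1} n_{i+2}$. The hard part will be the $2S_1^2/(1-S_0)^3$ term. Here $S_1^2=\sum_{i,j} x_i x_{i+1} x_j x_{j+1}$ must be split into three overlap types — coincident ($i=j$), adjacent ($|i-j|=1$), and disjoint ($|i-j|\ge 2$) — because each pattern shifts a different set of exponents and hence produces a different product of $n_\ell$ factors after coefficient extraction. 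The bookkeeping of the binomial and multinomial weights across these three cases is the crux, and the main pitfall is miscounting the adjacent pairs, since both $(i,i+1)$ and $(i+1,i)$ occur in the double sum, and mishandling the exponent shifts in the repeated-index cases.

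Once the three contributions are assembled, I expect the disjoint terms to recombine into $T_1^2=\sum_{i,j} n_i n_{i+1} n_j n_{j+1}$ minus the coincident and adjacent pieces. The adjacent cubic terms should cancel exactly against the $2S_2$ contribution, and the coincident terms collapse through the identity $n_i(n_i-1)n_{i+1}(n_{i+1}-1)-n_i^2 n_{i+1}^2 = -\,n_i n_{i+1}(n_i+n_{i+1}-1)$ into $-\sum_{i=1}^{k-1} n_i n_{i+1}(n_i+n_{i+1}-1)$, producing Eq.~\eqref{E:e12}. I would verify these cancellations term by term rather than expanding blindly, since they are what make the final formula compact.

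Finally, for the variance I would apply $\var_k^2(\bn)=E[\WW(\WW-1)]-\mu_k(\bn)^2+\mu_k(\bn)$ with $\mu_k(\bn)=T_1/n$ from Eq.~\eqref{E:mean}. Combining the two $T_1^2$ contributions via $\frac{1}{n(n-1)}-\frac{1}{n^2}=\frac{1}{n^2(n-1)}$, and rewriting $\frac{T_1}{n}-\frac{1}{n(n-1)}\sum_{i} n_i n_{i+1}(n_i+n_{i+1}-1)$ by pulling out $\frac{1}{n(n-1)}$ and using $T_1=\sum_i n_i n_{i+1}$, reduces the remainder to $\frac{1}{n-1}T_1-\frac{1}{n(n-1)}\sum_{i=1}^{k-1} n_i n_{i+1}(n_i+n_{i+1})$, which is exactly Eq.~\eqref{E:var}. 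This last algebra is routine once Eq.~\eqref{E:e12} is in hand.
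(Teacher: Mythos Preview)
Your proposal is correct and follows exactly the route the paper takes: differentiate $\G_k=(1-f)^{-1}-1$ twice, apply Lemma~\ref{L:S} to get $2S_1^2/(1-S_0)^3+2S_2/(1-S_0)^2$, extract $[\bx^\bn]$, and normalize. The paper's own proof stops at ``after expanding \ldots\ and simplifying the results'', so your case split of $S_1^2$ into coincident, adjacent, and disjoint pairs, together with the identity $n_i(n_i-1)n_{i+1}(n_{i+1}-1)-n_i^2n_{i+1}^2=-n_in_{i+1}(n_i+n_{i+1}-1)$, is precisely the bookkeeping the paper suppresses. One small wording quibble: the adjacent contribution from $S_1^2$ and the $2S_2$ term do not cancel each other but rather \emph{combine} to $2\sum_i n_in_{i+1}^2n_{i+2}$, which then cancels the adjacent piece you subtract when rewriting the disjoint sum as $T_1^2$ minus diagonal minus adjacent; your computation is right, only the phrase ``cancel exactly against'' is slightly loose.
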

\begin{proof}
  By taking the second order derivative of $\G_k$
  in Eq.~\eqref{E:G_in_x_2} with respect to $\w$ we obtain
  \[
    \frac{ \pd^2 \G_k} {\pd \w^2} =  \frac{2 (f')^2}{(1-f)^3} + \frac{f''}{(1-f)^2} .
    \]
    Using Lemma~\ref{L:S}, we have
    \begin{equation} \label{E:d2}
     \frac{ \pd^2 \G_k} {\pd \w^2} \Big |_{\w=1} =  \frac{2 S_1^2}{(1-S_0)^3} + \frac{2 S_2}{(1-S_0)^2} .
     \end{equation}
    After expanding the equation above,
    extracting the coefficient of $\bx^{\bn}$,
    dividing by the
    total number of sequences $n!/(n_1! n_2! \cdots n_k!)$,
    and simplifying the results,  we arrive at Eq.~\eqref{E:e12} and Eq.~\eqref{E:var}.
\end{proof}

A few explicit examples are given here for checking the formula.  
For $k=3$ and $\bn=[1, 2, 2]$, the generating function $P_3(w, [1,2,2])$ is given in Eq.~\eqref{E:Johnson2002}.
To calculate the mean and variance explicitly from the generating function,
the total number of sequences is $7 + 12 + 9 + 2 = 30$, and
the mean is $(7 \times 0 + 12 \times 1 + 9 \times 2 + 2 \times 3) / 30 = 6/5$,
while the variance is given by
$(7 \times 0^2 + 12 \times 1^2 + 9 \times 2^2 + 2 \times 3^2) / 30 - (6/5)^2 = 19/25$.
These results can check with Eqs.~\eqref{E:mean} and \eqref{E:var}, respectively.

For $k=5$ and $\bn=[2, 3, 1, 2, 2]$, the generating function for $\s_5(\W; \bn)$ is
\begin{multline*}
  P_5(w, [2, 3, 1, 2, 2]) = [x_1^2 x_2^3 x_3 x_4^2 x_5^2] \G_5 \\
  = 14346 + 26394 \w + 21480\w^2 + 10020\w^3 + 2850\w^4 + 474 \w^5 + 36\w^6,
\end{multline*}
and the mean, second factorial moment, and variance for this specification are
$  \mu_5 = \frac{3}{2} $, 
$  E[\WW(\WW-1)] = \frac{88}{45} $,
and
$\var_5^2 = \frac{217}{180} $.

As a special case for Theorem~\ref{Th:mean} and Theorem~\ref{Th:e12_var},
for specification $\bn=[\bone]$,
the mean and variance  have simple forms.
\begin{corollary} \label{C:1-mean-var}
  When $\bn=[\bone]$, the mean,  the second factorial moment, and variance are
\begin{align*}
  \mu_k(\bone) &= \frac{k-1}{k} = 1 - \frac{1}{k} ,\\
   E[\WW(\WW-1)]   &=  \frac{k-2}{k}, \\
 \var_k^2(\bone) &= \frac{k^2-k-1}{k^2} .
\end{align*}
\end{corollary}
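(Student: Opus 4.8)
The plan is to derive all three quantities by specializing the general formulas of Theorems~\ref{Th:mean} and~\ref{Th:e12_var} to $\bn = [\bone]$, i.e.\ $n_i = 1$ for every $i$. Then $n = \sum_{i=1}^k n_i = k$, every adjacent product $n_i n_{i+1}$ equals $1$, and each sum over $i = 1, \dots, k-1$ simply counts its $k-1$ identical terms. The key quantity is therefore $T_1 = \sum_{i=1}^{k-1} n_i n_{i+1} = k-1$. For the mean this is already enough: Eq.~\eqref{E:mean} gives $\mu_k(\bone) = (k-1)/k = 1 - 1/k$ directly.

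For the second factorial moment I would specialize Eq.~\eqref{E:e12}. Each summand $n_i n_{i+1}(n_i + n_{i+1} - 1)$ equals $1$ when $n_i = n_{i+1} = 1$, so the subtracted sum is $k-1$; using $T_1^2 = (k-1)^2$ and the prefactor $1/[k(k-1)]$, the bracket factors as $(k-1)\big[(k-1) - 1\big]$, and cancelling the common factor $(k-1)$ leaves $E[\WW(\WW-1)] = (k-2)/k$.

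For the variance I would specialize the three terms of Eq.~\eqref{E:var} individually. The remaining sum $\sum_{i=1}^{k-1} n_i n_{i+1}(n_i + n_{i+1})$ has each summand equal to $2$, hence equals $2(k-1)$. The three terms then reduce to $(k-1)/k^2$, $1$, and $2/k$ respectively; placing them over the common denominator $k^2$ and collecting the numerators gives $(k - 1 + k^2 - 2k)/k^2 = (k^2 - k - 1)/k^2$. No genuine obstacle arises here, since once the two parent theorems are granted the corollary is pure arithmetic; the only step demanding slight care is this final variance computation, where the three contributions must be brought over a common denominator and the terms linear in $k$ checked to combine correctly before the closed form can be read off.
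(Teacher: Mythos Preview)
Your proof is correct and follows exactly the approach the paper intends: the corollary is presented there without proof, as a direct specialization of Theorems~\ref{Th:mean} and~\ref{Th:e12_var} to $n_i=1$ and $n=k$, which is precisely what you carry out. The arithmetic in each of your three specializations checks out.
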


\begin{remark}
By using Fa\`{a} di Bruno's formula on Eq.~\eqref{E:G_in_x_2} as $h(f(\w))$ with $h=1/(1-f)$, we have the expression
of the $m$th derivative of the generating function $\G_k$:
  \begin{align*}
  \frac{\pd^m \G_k}{\pd \w^m} &=  \sum_{m_1, m_2, \dots, m_m} \frac{m!}{m_1! m_2! \dots, m_m !}
  h^{(m_1 + m_2 + \dots + m_m)} \prod_{j=1}^m \left( \frac{ f^{(j)} (\w) } { j! } \right )^{m_j}   \\
  &= \sum_{m_1, m_2, \dots, m_m}   \multinom{m_1 + \dots + m_m}{m_1, m_2, \dots, m_m}
  \frac{m! }{(1-f(\w))^{m_1  + \dots + m_m + 1}}
  \left( \frac{ f^{(j)} (\w) } { j! } \right )^{m_j}  
  \end{align*}
  where the sum is over all $m$-tuples of nonnegative integers $(m_1, m_2, \dots, m_m)$ satisfying the constraint
  \[
   m_1 + 2 m_2 \cdots + m m_m =  m .
   \]
Using Lemma~\ref{L:S} we obtain at $\w=1$:
   \begin{equation}  \label{E:dm}
    \frac{ \pd^m \G_k} {\pd \w^m} \Big |_{\w=1} =
    m! \sum_{m_1, m_2, \dots, m_m}   \multinom{m_1 + m_2 + \dots + m_m}{m_1, m_2, \dots, m_m}
    \frac{ \prod_{j=1}^m S_j^{m_j} }{(1-S_0)^{m_1 + m_2 + \dots + m_m + 1}} .
   \end{equation}
Eq.~\eqref{E:d1} and Eq.~\eqref{E:d2}
are special cases of Eq.~\eqref{E:dm} with $m=1$ and $m=2$
respectively.
With Eq.~\eqref{E:dm}
we can obtain higher order factorial moments
for \suc{} in random permutations of arbitrary multisets.

\end{remark}

\begin{appendices}

\section{Determinant of $\M_k$} \label{A:DM}
From Eqs.~\eqref{E:matrix} and ~\eqref{E:H} we see that the denominators of
$\G_k$ and $\HH_k$ are determined by the determinant of matrix $\M_k$.
In this Appendix we give an explicit formula for the determinant of $\M_k$.

\begin{lemma}[Determinant of $\M_k$] \label{L:det_M_k}
  The determinant of $\M_k$ is given by
\begin{equation} \label{E:det_M_k_exp}
 \det(\M_k) 
 = \prod_{i=1}^k (1+g_i)
 \left[
   1 - \sum_{i=1}^k \sum_{l=0}^{i-1} (\w-1)^l \prod_{j=i-l}^i f_j 
   \right] 
\end{equation}
with
$
 f_i = \frac{g_i}{1+g_i}.
$
\end{lemma}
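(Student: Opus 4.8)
The plan is to exhibit $\M_k$ as a rank-one update of a triangular matrix and then apply the matrix determinant lemma, which avoids any appeal to the explicit formula for $\HH_k$ and so keeps the argument independent of Theorem~\ref{Th:explicit}. Inspecting Eq.~\eqref{E:M}, every row $i$ carries the constant $-g_i$ in all off-diagonal positions, except that the superdiagonal entry $(i,i+1)$ has an extra factor $\w$. I would therefore first write
\[
 \M_k = \B - \by_k\,\be_k,
\]
where $\be_k = [1,\dots,1]$ and $\by_k = [g_1,\dots,g_k]^T$ are exactly the vectors of Eq.~\eqref{E:matrix}, the outer product $\by_k\be_k$ puts $g_i$ into entry $(i,j)$, and $\B$ is the upper bidiagonal matrix absorbing the diagonal together with the extra $\w$:
\[
 \B = \begin{bmatrix}
  1+g_1 & -g_1(\w-1) &        &        \\
        & 1+g_2      & -g_2(\w-1) &    \\
        &            & \ddots & \ddots \\
        &            &        & 1+g_k
 \end{bmatrix}.
\]
A direct check confirms the decomposition: $(\B)_{ii}-g_i = 1$, $(\B)_{i,i+1}-g_i = -g_i\w$, and $0-g_i = -g_i$ elsewhere, which reproduces Eq.~\eqref{E:M}.

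Since $\B$ is upper triangular, $\det(\B) = \prod_{i=1}^k(1+g_i)$, already the prefactor in Eq.~\eqref{E:det_M_k_exp}; and as a formal power series this is a unit, so $\B$ is invertible. The matrix determinant lemma then gives
\[
 \det(\M_k) = \det(\B)\bigl(1 - \be_k\,\B^{-1}\by_k\bigr),
\]
reducing everything to the scalar $\be_k\B^{-1}\by_k$. To evaluate it I would solve $\B\,\mathbf{u}=\by_k$ by back-substitution on the bidiagonal structure: the last equation gives $u_k = g_k/(1+g_k)=f_k$, and for $i<k$ the equation $(1+g_i)u_i - g_i(\w-1)u_{i+1}=g_i$ yields the recurrence $u_i = f_i\bigl(1+(\w-1)u_{i+1}\bigr)$, whose unrolled solution is
\[
 u_i = \sum_{l=0}^{k-i}(\w-1)^l\prod_{j=i}^{i+l} f_j .
\]
Hence $\be_k\B^{-1}\by_k = \sum_{i=1}^k u_i$ is a sum of weighted consecutive products $(\w-1)^{b-a}f_af_{a+1}\cdots f_b$, here indexed by the starting position $a=i$ and the length.

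The main obstacle — really the only step requiring care — is to match this to the form in Eq.~\eqref{E:det_M_k_exp}, which indexes the same family of blocks by their \emph{ending} position. Putting $a=i-l,\ b=i$ rewrites $\sum_{i=1}^k\sum_{l=0}^{k-i}(\w-1)^l\prod_{j=i}^{i+l}f_j$ as $\sum_{i=1}^k\sum_{l=0}^{i-1}(\w-1)^l\prod_{j=i-l}^{i}f_j$; both expressions equal $\sum_{1\le a\le b\le k}(\w-1)^{b-a}\prod_{j=a}^{b}f_j$ and therefore coincide. Substituting back into $\det(\M_k)=\det(\B)(1-\be_k\B^{-1}\by_k)$ yields Eq.~\eqref{E:det_M_k_exp}. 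I would verify the base case $k=1$ directly ($\det(\M_1)=1=(1+g_1)(1-f_1)$) as a sanity check, but no induction is needed: the matrix determinant lemma handles all $k$ at once.
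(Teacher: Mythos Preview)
Your argument is correct and genuinely different from the paper's. The paper proceeds by bordering $\M_k$ to a $(k{+}1)\times(k{+}1)$ matrix, performing column operations, and then expanding along the last row to obtain a pair of coupled recurrences in $k$ for auxiliary determinants $\det(\U_k)$ and $\det(\V_k)$, which are then solved and recombined. You instead observe in one stroke that $\M_k=\B-\by_k\be_k$ is a rank-one perturbation of an upper bidiagonal matrix, so the matrix determinant lemma reduces everything to a single back-substitution on a two-term recurrence. Your route is shorter and more conceptual: it isolates exactly why the prefactor $\prod_i(1+g_i)$ and the consecutive-block sum $\sum_{a\le b}(\w-1)^{b-a}\prod_{j=a}^{b}f_j$ appear, and it handles all $k$ simultaneously without induction. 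The paper's expansion, while longer, has the minor side benefit of producing the intermediate quantity $\det(\V_k)$ explicitly, but for the lemma as stated your approach is the cleaner one and, as you note, is manifestly independent of Theorem~\ref{Th:explicit}.
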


\begin{proof}

From the definition of $\M_k$ in Eq.~\eqref{E:M} we have the determinant of $\M_k$ as:
\begin{align} \label{E:det_M_k}
\det(\M_k)
&= \begin{vmatrix}
  1 & \mathbf{0} \\
  \by_k & \M_k
\end{vmatrix}
=
\begin{vmatrix}
  1 & 0 & 0 & 0 & \cdots & 0\\
  g_1 &         1 & -g_1 \w  & -g_1   & \cdots & -g_1 \\
  g_2 & -g_2  &  1           & -g_2 \w  & \cdots & -g_2  \\
      \vdots & \vdots       & \vdots       & \vdots & \vdots  & \vdots     \\
  g_{k-1}  &-g_{k-1}  & \cdots & -g_{k-1}   & 1        & -g_{k-1} \w \\
  g_k     & -g_k     & \cdots & -g_k      & -g_k  & 1 
\end{vmatrix} \nonumber \\
&=
\begin{vmatrix}
  1 & 1 & 1 & 1 & \cdots & 1\\
  g_1 &       1+g_1 & (1-\w) g_1   & 0   & \cdots & 0 \\
  g_2 &  0  &  1+g_2           & (1-\w) g_2   & \cdots & 0  \\
      \vdots & \vdots       & \vdots       & \vdots & \vdots  & \vdots    \\
  g_{k-1}  & 0  & \cdots & 0   & 1+g_{k-1}        & (1-\w) g_{k-1}  \\
  g_k     & 0  & \cdots & 0      & 0  & 1+ g_k
\end{vmatrix}
= \det( \U_{k} ) \nonumber \\
&= (1+ g_k) \det(\U_{k-1}) + (-1)^k g_k \det(\V_{k-1}), 
\end{align}
where in the first step we use Laplace expansion to
extend the matrix $\M_k$ to a $(k+1) \times (k+1)$ matrix
by attaching the first row and the first column,
in the second step the first column is added to the other columns,
and the last step is the Laplace expansion on the last row
of the matrix, with matrices $\U_k$ and $\V_k$ defined below:
\[
\U_k =
 \begin{bmatrix}
  1 & 1 & 1 & 1 & \cdots & 1\\
  g_1 &       1+g_1 & (1-\w) g_1   & 0   & \cdots & 0 \\
  g_2 &  0  &  1+g_2           & (1-\w) g_2   & \cdots & 0  \\
      \vdots & \vdots       & \vdots       & \vdots & \vdots   & \vdots   \\
  g_{k-1}  & 0  & \cdots & 0   & 1+g_{k-1}        & (1-\w) g_{k-1}  \\
  g_k     & 0  & \cdots & 0      & 0  & 1+ g_k
\end{bmatrix},
\]
and 
\[
\V_k =
 \begin{bmatrix}
  1 & 1 & 1 & \cdots & 1\\
         1+g_1 & (1-\w) g_1   & 0   & \cdots & 0 \\
    0  &  1+g_2           & (1-\w) g_2   & \cdots & 0  \\
      \vdots & \vdots       & \vdots       & \vdots & \vdots      \\
   0  & \cdots    & 1+g_{k-1}        & (1-\w) g_{k-1} & 0 \\
   0  & \cdots & 0   & 1+g_{k}        & (1-\w) g_{k}
\end{bmatrix} .
 \]
 
By expansion on the last column of matrix $\V_k$,
 the recurrence for determinant of $\V_k$
 can be obtained as 
\[
\det(\V_k) = (1-\w) g_k \det(\V_{k-1})
 + (-1)^k \prod_{i=1}^k (1+g_i)
\]
with $\det(\V_0) = 1$.
If we define
\[
 f_i = \frac{g_i}{1+g_i},
\]
$\det(\V_k)$ can be solved from the recurrence as 
\[
 \det(\V_k) = (-1)^k \prod_{i=1}^k (1+g_i)
\sum_{i=0}^k (\w-1)^i \prod_{j=0}^{i-1} f_{k-j} .
 \]

The determinant of $\M_k$ can be solved from Eq.~\eqref{E:det_M_k} as 
\[
\det(\M_k) = \det(\U_k)
= \prod_{i=1}^k (1+g_i)
+ (-1)^k \sum_{i=0}^{k-1} (-1)^i g_{k-i} \det(\V_{k-i-1}) \prod_{j=k-i+1}^{k}
   (1+g_{j})  .
\]
Substituting the expression of $\det(\V_k)$
with further simplification gives 
Eq.~\eqref{E:det_M_k_exp}.

\end{proof}

\section{Explicit formulas when $k=2$ and $k=3$} \label{A:k2k3}
For a general $k$ and an arbitrary specification $\bn$,
it's not easy to write down an explicit expression for $\Q_k(\w; \bn)$ or $\s_k(\W; \bn)$.
As $k$ increases, the formulas become more complicated.
For small values of $k$, however, the expressions are manageable.
In this appendix we give expressions for $k=2$ and $k=3$.

\subsection{$k=2$}
For $k=2$, from Eq.~\eqref{E:G_in_x} we get
\[
 \G_2 = \frac{1}{1-(x_1 + x_2 + x_1 x_2 (\w-1) )} - 1.
 \]
Using the binomial theorem to expand $\G_2$ we obtain 
 \[
  \Q_2(\w; [n_1, n_2]) = \sum_{i=0}^{n_2} \binom{n_1 + n_2 - i}{n_2} \binom{n_2}{i} (\w-1)^i
  \]
Using the identity
\[
\sum_i \binom{s_1+s_2-i}{s_2} \binom{s_2}{i} \binom{i}{j} (-1)^i
= (-1)^j \binom{s_1}{j}\binom{s_2}{j},
\]
we have
\begin{corollary}
  When $k=2$, $\Q_2(\w; [n_1, n_2])$ is
\[
 \Q_2(\w; [n_1, n_2]) = \sum_{\W=0}^{\min(n_1, n_2)} \binom{n_1}{\W} \binom{n_2}{\W} \w^{\W} .
 \]
\end{corollary}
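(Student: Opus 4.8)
The plan is to start from the binomial-theorem expansion of $\G_2$ already recorded just above the statement, namely
\[
\Q_2(\w;[n_1,n_2]) = \sum_{i=0}^{n_2}\binom{n_1+n_2-i}{n_2}\binom{n_2}{i}(\w-1)^i,
\]
and to convert it from a polynomial in $\w-1$ into a polynomial in $\w$, so that the supplied combinatorial identity can be applied to each coefficient. The whole argument is an exercise in reorganizing a double sum and matching it to the given identity.

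First I would expand each power by the binomial theorem, $(\w-1)^i = \sum_{j=0}^{i}\binom{i}{j}(-1)^{i-j}\w^j$, and substitute to obtain a double sum indexed by $i$ and $j$. Next I would interchange the order of summation so as to collect, for each fixed $j$, the full coefficient of $\w^j$. Pulling the sign factor out as $(-1)^{-j}=(-1)^j$ leaves precisely the inner sum
\[
\sum_i \binom{n_1+n_2-i}{n_2}\binom{n_2}{i}\binom{i}{j}(-1)^i ,
\]
which is the left-hand side of the stated identity with $s_1=n_1$ and $s_2=n_2$.

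Applying that identity replaces the inner sum by $(-1)^j\binom{n_1}{j}\binom{n_2}{j}$; the two factors of $(-1)^j$ then cancel, leaving the coefficient of $\w^j$ equal to $\binom{n_1}{j}\binom{n_2}{j}$. Since $\binom{n_1}{j}\binom{n_2}{j}$ vanishes whenever $j>\min(n_1,n_2)$, I would truncate the range accordingly and rename the summation index $j$ as $\W$, which yields exactly the claimed closed form.

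The step I expect to be the main obstacle is purely bookkeeping rather than conceptual: keeping the sign powers $(-1)^{i-j}$ straight, adjusting the summation limits correctly when $\sum_i$ and $\sum_j$ are swapped, and reconciling the index conventions of the provided identity (which is written with an unrestricted summation over $i$) with the finite range inherited from $\binom{n_2}{i}$. The payoff of getting these details right is that the two $(-1)^j$ factors cancel cleanly, producing the manifestly symmetric, nonnegative coefficients $\binom{n_1}{\W}\binom{n_2}{\W}$.
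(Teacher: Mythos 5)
Your proposal is correct and follows essentially the same route as the paper: the paper also starts from the $(\w-1)$-expansion of $\G_2$ and invokes the stated identity to pass to the coefficients of $\w^{\W}$, merely compressing the sign and summation-interchange bookkeeping that you spell out explicitly. Your verification that the two factors of $(-1)^j$ cancel and that the range truncates at $\min(n_1,n_2)$ is exactly the omitted detail, so nothing is missing.
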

From the Corollary we obtain the number of sequences with \suc{} as
\[
  \s_k(\W; [n_1, n_2]) = [\w^{\W}] \Q_2(\w; [n_1, n_2]) = \binom{n_1}{\W} \binom{n_2}{\W} .
\]

\subsection{$k=3$}
For $k=3$,
by expanding Eq.~\eqref{E:G_3} we get the generating function $\Q_3(\w; [n_1, n_2, n_3])$:

\begin{corollary}
  When $k=3$, $\Q_2(\w; [n_1, n_2, n_3])$ is
 \[
 \Q_3(\w; [n_1, n_2, n_3]) = \sum_{i=0}^{n_1} \sum_{j=0}^{n_2} \sum_{l=0}^{n_3}
 \multinom{n_1+n_2+n_3 - i-j-2l}{i, j , l, n_1-i-l, n_2 -i-j-l, n_3-j-l}
 (\w-1)^{i+j+2l}  
 \]
 with $n_1-i-l \ge 0$,
 $n_2-i-j-l \ge 0$,
 and $n_3-j-l \ge 0$ in the sums.
\end{corollary}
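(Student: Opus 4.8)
The plan is to read off $\Q_3$ directly as a coefficient extraction from the closed form of $\G_3$, exactly parallel to the $k=2$ case but with the multinomial theorem in place of the binomial theorem. Writing the denominator of Eq.~\eqref{E:G_3} (equivalently Eq.~\eqref{E:G_in_x} with $k=3$ and $f_i=x_i$) as $1-D$, where
\[
 D = x_1 + x_2 + x_3 + (\w-1)\,x_1 x_2 + (\w-1)\,x_2 x_3 + (\w-1)^2 x_1 x_2 x_3,
\]
we have $\G_3 + 1 = (1-D)^{-1} = \sum_{m\ge 0} D^m$, so that $\Q_3(\w;[n_1,n_2,n_3]) = [x_1^{n_1} x_2^{n_2} x_3^{n_3}]\sum_{m\ge0} D^m$.

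Next I would expand each $D^m$ by the multinomial theorem over the six monomials of $D$, assigning exponents $a_1,a_2,a_3$ to the linear terms $x_1,x_2,x_3$ and exponents $i,j,l$ to the terms $(\w-1)x_1x_2$, $(\w-1)x_2x_3$, $(\w-1)^2x_1x_2x_3$, subject to $a_1+a_2+a_3+i+j+l=m$. A generic term then carries $x_1^{a_1+i+l} x_2^{a_2+i+j+l} x_3^{a_3+j+l}$ and $(\w-1)^{i+j+2l}$, weighted by the coefficient $\multinom{m}{a_1,a_2,a_3,i,j,l}$. Matching the three exponents of $x_1,x_2,x_3$ to $n_1,n_2,n_3$ solves the linear slots uniquely as $a_1=n_1-i-l$, $a_2=n_2-i-j-l$, $a_3=n_3-j-l$, which in turn forces $m=n_1+n_2+n_3-i-j-2l$. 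Substituting these values shows the top of the multinomial coefficient equals $m$, and the accompanying power of $\w-1$ is precisely $i+j+2l$, matching the claimed formula verbatim.

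Finally, the sum over the single index $m$ reorganizes into a sum over the three free indices $i,j,l$, since each choice of $(i,j,l)$ fixes $a_1,a_2,a_3$ and hence $m$; the only surviving restrictions are $a_1,a_2,a_3\ge0$, i.e. $n_1-i-l\ge0$, $n_2-i-j-l\ge0$, $n_3-j-l\ge0$, which are exactly the nonnegativity conditions stated in the corollary and which confine the indices to the finite ranges $0\le i\le n_1$, $0\le j\le n_2$, $0\le l\le n_3$. I expect the only real care to lie in this last bookkeeping step: verifying that each admissible monomial of each $D^m$ contributes once and only once to the target coefficient, so that nothing is double counted and the triple sum with the stated inequalities captures exactly $[x_1^{n_1}x_2^{n_2}x_3^{n_3}]\,\G_3$. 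The preceding $k=2$ corollary furnishes the template, and an optional independent sanity check is to confirm that the identity reproduces Eq.~\eqref{E:Johnson2002} for $\bn=[1,2,2]$.
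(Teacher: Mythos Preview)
Your proof is correct and follows precisely the route the paper indicates: the paper simply says ``by expanding Eq.~\eqref{E:G_3} we get'' the stated formula, and your expansion via the geometric series $\G_3+1=\sum_{m\ge0}D^m$ together with the multinomial theorem over the six monomials of $D$ is exactly that computation spelled out in full. The bookkeeping you flag is indeed routine, since for each $(i,j,l)$ the exponents $a_1,a_2,a_3$ and hence $m$ are uniquely determined.
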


\end{appendices}

\bibliographystyle{elsarticle-harv}
\bibliography{run}

\end{document}